\documentclass[10pt]{amsart}

\usepackage{amssymb,mathrsfs,mathscinet}
\usepackage{algorithmic}
\usepackage{hyperref}

\usepackage{enumerate}        

\newtheorem{theorem}{Theorem}[section]
\newtheorem{question}[theorem]{Question}
\theoremstyle{definition}

\newtheorem{proposition}[theorem]{Proposition}
\newtheorem*{mainquestion}{Main Question}

\newcommand {\Z}{\mathbb{Z}}

\makeatletter
\def\imod#1{\allowbreak\mkern10mu({\operator@font mod}\,\,#1)}
\makeatother

\begin{document}

\title{On Restricting Subsets of Bases in Relatively Free Groups}
\author[L.~Sabalka]{Lucas Sabalka}
\address{
  Department of Mathematics and Computer Science\\
  Saint Louis University}
\email{\href{mailto:lsabalka@slu.edu}{lsabalka@slu.edu}}

\author[D.~Savchuk]{Dmytro Savchuk}
\address{
  Department of Mathematics and Statistics\\
  University of South Florida}
\email{\href{mailto:savchuk@usf.edu}{savchuk@usf.edu}}

\begin{abstract}

Let $G$ be a finitely generated free, free abelian of arbitrary exponent, free nilpotent, or free solvable group, or a free group in the variety $\mathbf A_m\mathbf A_n$, and let $A = \{a_1, \dots, a_r\}$ be a basis for $G$.  We prove that, in most cases, if $S$ is a subset of a basis for $G$ which may be expressed as a word in $A$ without using elements from $\{a_{l+1},\ldots,a_r\}$, then $S$ is a subset of a basis for the relatively free group on $\{a_1, \dots, a_{l}\}$.

\end{abstract}

\maketitle
\section{Introduction}

Let $F = F(A)$ be a free group of rank $r \geq 2$ with basis $A = \{a_1, \dots, a_r\}$.  A subgroup $V$ of $F$ is \emph{fully invariant} if, for all endomorphisms $f: F \to F$, $f(V) \subseteq V$.  Fully invariant subgroups of $F$ include the trivial subgroup, the derived subgroups, the subgroups in the lower central series of $F$, and, more generally, verbal subgroups of $F$.  Every fully invariant subgroup is normal in $F$, and the quotients of the form $F/V$ include the free group, free nilpotent groups, and free solvable groups.  A subset $S \subset F$ is \emph{primitive in $F$ mod $V$} if the corresponding set of cosets $SV$ of $V$ can be extended to a basis of $F/V$ (i.e. to a generating set for $F/V$ such that each mapping of this set to $F/V$ extends to an endomorphisms of $F/V$).

We wish to study the following situation.  Let $V$ be a fully invariant subgroup of $F$ and let $S$ be a primitive set mod $V$. Assume that there exists some index $l \in \{1, \dots, r-1\}$ such that, for each $s \in S$, the generators $a_t, t=l+1,\ldots,r$ of $F(A)$ are not used to express $s$ as a reduced word in the alphabet $A$.  In short, $S \subset \hat{F} \subset F$, where $\hat{F} := F(\{a_1, \ldots, a_{l}\})$. Then $\hat V=V\cap\hat F$ is a fully invariant subgroup of $\hat F$ (as every endomorphism of $\hat F$ can be extended to an endomorphism of $F$). Our main question is:

\begin{mainquestion}
Under what conditions is it true that $S$ is also primitive in $\hat{F}$ mod $\hat V$?
\end{mainquestion}

If $V$ is trivial -- that is, if we are considering $S$ to be primitive in $F$ -- and $|S| = r-1$ (where $|S|$ denotes the size of set $S$), then it is not difficult to show that $S$ must be primitive in $\hat{F}$.  The question becomes more interesting when $|S|$ is allowed to vary, and when $V$ is allowed to vary.  Our main results answer this question for different choices of $V$, $|S|$ and $l$.  Let $\mathbf A_m$ denote the variety of all abelian groups whose exponent divides $m$. In particular, $\mathbf A:=\mathbf A_0$ is the variety of all abelian groups. Then the product variety $\mathbf A_m\mathbf A_n$ consists of extensions of abelian groups of exponent dividing $m$ by abelian groups of exponent dividing $n$.

\pagebreak

\begin{theorem}\label{thm:main}  Let $S \subset \hat{F}$ be primitive in $F$ mod $V$.  Then $S$ is primitive in $\hat{F}$ mod $\hat{V}$ if $F/V$ is:
\begin{enumerate}
\item free,
\item free abelian of arbitrary exponent (including 0),
\item free nilpotent,
\item free in the abelian-by-abelian variety $\mathbf A_m\mathbf A_n$ and at least one of the following conditions fails:  $(a)$ $m=0$, $(b)$ $n>0$, or $(c)$ either $|S| = r-1$ or $|S|=l-1$; or
\item free solvable and $|S|=r-1$.
\end{enumerate}
\end{theorem}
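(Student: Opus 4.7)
The five cases share a common opening move via the retraction $\rho\colon F\to\hat F$ defined by $\rho(a_i)=a_i$ for $i\le l$ and $\rho(a_j)=1$ for $j>l$. Since $V$ is fully invariant, $\rho(V)\subseteq V\cap\hat F=\hat V$, so $\rho$ descends to a retraction $\bar\rho\colon F/V\to\hat F/\hat V$ having the natural inclusion $\hat F/\hat V\hookrightarrow F/V$ as a section. If $S=\{s_1,\dots,s_k\}$ extends to a basis $\{s_1,\dots,s_k,t_{k+1},\dots,t_r\}$ of $F/V$, then applying $\bar\rho$ produces a generating set $S\cup\{\bar\rho(t_{k+1}),\dots,\bar\rho(t_r)\}$ of $\hat F/\hat V$ that already contains $S$. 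The theorem thus reduces to extracting a basis of $\hat F/\hat V$ from these images with $S$ kept intact, equivalently to showing that $\langle S\rangle$ is a ``free factor'' of $\hat F/\hat V$ in the appropriate varietal sense.

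Case (1) I would settle via the Kurosh subgroup theorem: $\langle S\rangle$ is a free factor of $F$ (since $S$ extends to a basis) and $\hat F$ is a free factor of $F$ containing $\langle S\rangle$; applied to the decomposition $F=\langle S\rangle*\langle t_{k+1},\dots,t_r\rangle$ and the subgroup $\hat F$, Kurosh yields a free-product decomposition of $\hat F$ with $\langle S\rangle$ as one free factor. Case (2) reduces to a linear-algebra statement over $\mathbb Z$ or $\mathbb Z/m$: since $s_1,\dots,s_k\in\mathbb Z^l\hookrightarrow\mathbb Z^r$ extend to a unimodular $r\times r$ matrix, the natural map $\mathbb Z^l/\langle s_i\rangle\to\mathbb Z^r/\langle s_i\rangle$ is injective with free target, forcing $\langle s_i\rangle$ to be a direct summand of $\mathbb Z^l$. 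Case (3) then reduces to (2) via the classical fact that primitivity in a free nilpotent group is detected by its abelianization, lifted layer-by-layer through the lower central series; one also needs the essentially-automatic identity $\hat V=\gamma_c(\hat F)$, which uses that $\hat F$ is a free factor of $F$.

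Cases (4) and (5) are the substantive part, and I expect the right tool to be the Magnus embedding: a free abelian-by-abelian or free solvable group embeds into a matrix group over a group ring of its abelianization, so primitivity translates into a unimodular-row condition in a module over such a ring. The concrete question becomes whether a row that is unimodular over the larger ring $\mathbb Z_m[F^{\mathrm{ab}}]$ is already unimodular over the subring $\mathbb Z_m[\hat F^{\mathrm{ab}}]$. For $m>0$ or $n=0$, these rings have enough commutative-algebraic structure for the descent to go through directly. The excluded regime $m=0$, $n>0$, $|S|\notin\{r-1,l-1\}$ is precisely where stable-range obstructions appear; when $|S|=r-1$ or $|S|=l-1$, a Suslin-style one-column reduction still allows the descent. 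Case (5) is then handled by iterating such a reduction down the derived series of $F$, with $|S|=r-1$ providing the stability foothold at each step. The main obstacle, and the genuine source of the dichotomy in the statement, is exactly this descent-of-unimodularity problem between two group rings; everything outside the hypothesis on $m$, $n$, and $|S|$ amounts to bookkeeping along the retraction $\bar\rho$.
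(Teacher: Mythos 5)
Your opening retraction $\bar\rho$ only produces a redundant generating set of $\hat F/\hat V$ containing $S$; since a subset of a generating set need not be primitive (already $\{(2,0)\}\subset\{(2,0),(1,0),(0,1)\}$ in $\Z^2$), this reduces nothing, as you implicitly concede. Cases (1)--(3) nevertheless go through: your Kurosh argument for (1) is a legitimately different route from the paper, which instead uses Umirbaev's Fox-derivative criterion and a decomposition $P=Q+R$ of the right inverse of the Jacobian (the paper's choice is motivated by the fact that the same computation drives the $\mathbf A_m\mathbf A_n$ case); your case (3) matches the paper's reduction to the abelian case. In case (2) with exponent $m>0$ your stated mechanism fails: a submodule of a free $\Z/m$-module with free quotient target need not split off (consider $2\Z/4\subset\Z/4$), so ``injective with free target'' does not force a direct summand; you would need the self-injectivity of $\Z/m$ (or the paper's cleaner route: the block-triangular basis matrix has unit determinant, hence so does its upper-left block).

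The real gaps are in (4) and (5). First, you have inverted the hypothesis of (4): the theorem excludes the regime where \emph{all three} of $m=0$, $n>0$, and $|S|\in\{r-1,l-1\}$ hold, so the case $|S|\notin\{r-1,l-1\}$ (with $m=0$, $n>0$) is one the theorem \emph{does} cover, and your proposed Suslin-style reduction ``when $|S|=r-1$ or $l-1$'' aims at precisely the case that is not claimed. The restriction comes not from stable-range obstructions in your descent but from the applicability of the Gupta--Timoshenko primitivity criterion (which requires one of $m=0$, $n>0$, $k=\mathrm{rank}-1$ to fail) in both $F$ and $\hat F$. Second, the actual content of (4) --- that unimodularity of the $k$-th order minors of the Jacobian descends from $\Z_m(F/F'F^n)$ to $\Z_m(\hat F/\hat F'\hat F^n)$ --- is asserted, not proved; the paper proves it by writing $1=\sum m_ip_i$, splitting each $p_i$ into terms involving or not involving $a_{l+1},\dots,a_r$, and running a support argument to kill the unwanted part. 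For (5), ``iterating down the derived series'' is not an argument and misses the two essential ingredients: Krasnikov's criterion for $F/[N,N]$ characterizes generating sets of size exactly $r$ (which is why $|S|=r-1$ is needed, so that $S$ can be completed to a full basis and one row and column deleted), and the deduction that the inverse matrix has the required block form uses that $\Z(F/F^{(t-1)})$ has no zero divisors, via Linnell's theorem on the Kaplansky conjecture for torsion-free elementary amenable groups. Without these, cases (4) and (5) remain unproved.
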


The motivation for this theorem came from the statement in the free case, which was initially proved by the authors as a lemma pertaining to the work in \cite{SabalkaSavchuk}.  The result was a generalization of one case of a result describing the structure of subwords of primitive elements in $F$ that do not involve one of the generators of $F$.  Though it was eventually realized that this lemma was unnecessary for the paper \cite{SabalkaSavchuk}, the authors felt the lemma and proof interesting enough to generalize and publish.

We conclude the introduction with the following natural question.

\begin{question}
Does there exist a variety of groups for which the Main Question has a negative answer?  In particular, are there varieties where a stably primitive element is not necessarily primitive?
\end{question}

The structure of the paper is as follows. We begin by examining the case where $V$ is trivial in Section~\ref{sec:free}. Then we proceed to cases of the free abelian groups of arbitrary exponent in Section~\ref{sec:abelian} and the free nilpotent and free solvable groups in Section~\ref{sec:nilpotent}. We conclude the paper by the cases of free abelian-by-abelian of arbitrary exponents in Section~\ref{sec:product_variety} and free solvable groups in Section~\ref{sec:solvable}.

The authors would like to thank Mark Sapir for helpful advice on this paper, and Michael Handel, Ilya Kapovich, Olga Kharlampovich, Marcin Mazur, and Benjamin Steinberg for useful conversations on this material. And we are especially grateful to Martin Kassabov for providing a short proof of the free nilpotent case that replaced a longer argument involving `lifting primitivity' and to Vitali\u{\i} Roman'kov for pointing out an error in an earlier version of the paper. Finally, we greatly appreciate the comments and suggestions of the referee that enhanced the paper.

\section{The Free Case}\label{sec:free}

In the free case, the answer to our Main Question is:  always.  Our proof uses an interpretation of primitivity using Fox derivatives via a theorem of Umirbaev.

We begin by recalling the definition of free Fox derivatives and establish notation for them.  Let $\mathbb ZF_r$ denote the integral ring over the free group $F_r$. For each $j=1,2,\ldots,r$ define a linear map $\partial_j\colon \Z F_r\to\Z F_r$ recursively by
\[\partial_j(a_j)=1,\qquad \partial_j(a_i)=0, i\neq j\]
and
\[\partial_j(uv)=\partial_j(u)+u\partial_j(v) \text{ for all } u,v\in F_r.\]
The map $\partial_j$ is called the (left) free Fox derivative associated with $a_j$.

\begin{theorem}\cite{umirbaev:primitive}\label{thm:Umirbaev}
A subset $\{x_1,\ldots,x_k\}$ is primitive in $F_r$ if and only if the $k\times r$ Jacobian matrix $J=\bigl[\partial_j(x_i)\bigr]_{1\leq i\leq k, 1\leq j\leq r}$, is right invertible in the integral ring $\mathbb ZF_r$.
\end{theorem}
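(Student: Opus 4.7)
The plan is to establish both implications, with the forward direction essentially reducing to Birman's classical theorem and the reverse direction being the substantive content of Umirbaev's result.

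For the forward direction, suppose $\{x_1,\ldots,x_k\}$ is primitive and extend it to a basis $\{x_1,\ldots,x_r\}$ of $F_r$. The endomorphism $\phi$ defined by $\phi(a_i)=x_i$ is then an automorphism. By Birman's theorem, an endomorphism of $F_r$ is an automorphism if and only if its full $r\times r$ Jacobian $J'=\bigl[\partial_j(x_i)\bigr]_{1\leq i,j\leq r}$ is invertible in $M_r(\mathbb Z F_r)$. Letting $N=(J')^{-1}$ and taking $M$ to be the matrix of the first $k$ columns of $N$, the upper $k\times k$ block of the identity $J'N=I_r$ yields $JM=I_k$, so $J$ is right invertible.

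For the converse, assume $J$ is right invertible. The goal is to produce $x_{k+1},\ldots,x_r\in F_r$ so that $\{x_1,\ldots,x_r\}$ is a basis; equivalently, by Birman's theorem, to extend $J$ to an invertible $r\times r$ Jacobian matrix over $\mathbb Z F_r$. The approach is to use Nielsen-type moves on the tuple $(x_1,\ldots,x_k)$: the chain rule $\partial_j(uv)=\partial_j(u)+u\partial_j(v)$ shows that such moves correspond exactly to elementary left row operations on $J$, which preserve right invertibility. Using the hypothesized right inverse as a source of algebraic information, one would aim to normalize $J$ via these operations to a matrix of the form $[\,I_k\;|\;0\,]$ after a suitable permutation of columns; at that point the tuple will have been transformed into a sub-tuple of the distinguished basis $(a_1,\ldots,a_r)$, and completing with the remaining generators finishes the proof.

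The main obstacle is driving this reduction. Ring-theoretically, it is a stability statement: every right-invertible $k\times r$ matrix over $\mathbb Z F_r$ of Jacobian type can be completed, after Nielsen moves on the tuple, to an invertible $r\times r$ matrix of Jacobian type. Making this work requires exploiting the fact that $\mathbb Z F_r$ is a (weak) free ideal ring, together with a careful analysis under the augmentation $\mathbb Z F_r\to\mathbb Z$ and free differential calculus. A natural inductive setup is on $r-k$: if $k<r$, use the right inverse to construct an element $x_{k+1}\in F_r$ for which the enlarged matrix $\bigl[\partial_j(x_i)\bigr]_{1\leq i\leq k+1,\ 1\leq j\leq r}$ is again right invertible, and when $k=r$ the matrix is forced to be invertible so Birman's theorem applies. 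Identifying the correct candidate for $x_{k+1}$, and verifying that right invertibility is preserved under the chosen Nielsen reductions, is the delicate step I expect to be hardest.
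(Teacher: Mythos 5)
This statement is quoted in the paper from Umirbaev's work and is not proved there (the paper only remarks that the ``only if'' direction is due to Birman), so there is no internal proof to compare against; your attempt has to stand on its own. Your forward direction does stand: extending $\{x_1,\ldots,x_k\}$ to a basis, invoking Birman's inverse function theorem to invert the full $r\times r$ Jacobian $J'$, and reading off $JM=I_k$ from the upper $k\times k$ block of $J'N=I_r$ is correct and is exactly how this implication is usually obtained.

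The converse, however, is where the entire content of Umirbaev's theorem lies, and your argument for it is a plan rather than a proof. Two concrete problems. First, the claim that Nielsen moves on the tuple ``correspond exactly to elementary left row operations on $J$'' overstates what the chain rule gives you: replacing $x_i$ by $x_ix_{i'}^{\pm1}$ changes row $i$ to $\mathrm{row}_i + g\cdot\mathrm{row}_{i'}$ only for the \emph{specific group element} $g$ determined by the current tuple (e.g.\ $g=x_i$), not for an arbitrary coefficient in $\mathbb ZF_r$. So you do not have access to general elementary row operations, and the reduction of a right-invertible $J$ to $[\,I_k\mid 0\,]$ cannot be carried out by appealing to matrix reduction over a fir; the restriction to group-element coefficients is precisely why the theorem is hard. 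Second, the inductive step --- ``use the right inverse to construct an element $x_{k+1}$ for which the enlarged Jacobian is again right invertible'' --- is essentially a restatement of the theorem to be proved; you give no candidate for $x_{k+1}$ and no mechanism for extracting one from the right inverse. As written, the hard implication is assumed rather than established, and you acknowledge as much in your final sentence. To make this rigorous you would need to reproduce Umirbaev's actual argument (or an equivalent one); short of that, the correct move in the context of this paper is to cite the result, as the authors do.
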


We note that the ``only if'' part in this theorem was proven by Birman in \cite{birman:primitive}.

\begin{theorem} \label{thm:free}
Let $S=\{x_1,x_2,\ldots, x_k\}\subset \hat{F}$ be primitive in $F$. Then $S$ is primitive in $\hat{F}$.
\end{theorem}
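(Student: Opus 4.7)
The plan is to use Umirbaev's theorem in both directions: deduce a right inverse of the full Jacobian from primitivity in $F$, then push it down to a right inverse of the truncated Jacobian over $\mathbb{Z}\hat{F}$ via the natural retraction $F\to\hat{F}$.

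First I would observe that because each $x_i$ lies in $\hat{F}$, the Jacobian $J=[\partial_j(x_i)]$ computed in $\mathbb{Z}F_r$ has a simple block structure. By the recursive definition of $\partial_j$ together with the base cases $\partial_j(a_j)=1$, $\partial_j(a_i)=0$ for $i\neq j$, an easy induction on word length shows that $\partial_j(w)=0$ whenever $w$ is expressed using only $a_1,\ldots,a_l$ and $j>l$. Moreover, for $j\leq l$, the element $\partial_j(x_i)\in\mathbb{Z}F_r$ actually lies in the subring $\mathbb{Z}\hat{F}$ and coincides with the Fox derivative computed internally in $\hat{F}$. Thus, writing $\hat{J}=[\partial_j(x_i)]_{1\leq i\leq k,\,1\leq j\leq l}$, we have $J=[\,\hat{J}\ \vert\ 0\,]$ as a $k\times r$ matrix over $\mathbb{Z}F_r$ (equivalently, over $\mathbb{Z}\hat{F}$ in the first $l$ columns).

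Next, by Theorem~\ref{thm:Umirbaev} applied to $F=F_r$, primitivity of $S$ in $F$ gives an $r\times k$ matrix $K$ over $\mathbb{Z}F_r$ with $JK=I_k$. I would then apply the retraction $\pi\colon F\to\hat{F}$ fixing $a_1,\ldots,a_l$ and sending $a_{l+1},\ldots,a_r$ to $1$, extended linearly to $\pi\colon\mathbb{Z}F_r\to\mathbb{Z}\hat{F}$ and entrywise to matrices. Since $\pi$ is a ring homomorphism, $\pi(J)\pi(K)=I_k$. The matrix $\pi(J)$ still has the block form $[\,\hat{J}\ \vert\ 0\,]$ because $\hat{J}$ already has entries in $\mathbb{Z}\hat{F}$, so $\pi$ fixes them, while the zero block stays zero. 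Splitting $\pi(K)$ into its top $l$ rows $K_1$ and bottom $r-l$ rows $K_2$, the product collapses to $\hat{J}K_1=I_k$, exhibiting $K_1$ as a right inverse of $\hat{J}$ in $\mathbb{Z}\hat{F}$. Applying the converse direction of Theorem~\ref{thm:Umirbaev} inside $\hat{F}$ then concludes that $S$ is primitive in $\hat{F}$.

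I do not anticipate a substantive obstacle: the argument is essentially bookkeeping once one notices that $\partial_j(x_i)=0$ for $j>l$. The only point that deserves care is the compatibility between Fox derivatives in $F$ and in $\hat{F}$, which is clear from their recursive definition, and the verification that the retraction $\pi$ preserves the already-existing entries of $\hat{J}$, which is immediate since they are words in $a_1,\ldots,a_l$. No further input beyond Umirbaev's theorem and the functoriality of Fox calculus under the retraction is needed.
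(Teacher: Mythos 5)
Your proof is correct, and it follows the same overall skeleton as the paper's: apply Theorem~\ref{thm:Umirbaev} to get a right inverse of $J$ over $\mathbb{Z}F_r$, observe that $J=[\,\hat{J}\mid 0\,]$ with $\hat{J}$ the Fox Jacobian computed in $\hat{F}$, manufacture a right inverse of $\hat{J}$ with entries in $\mathbb{Z}\hat{F}$, and apply Umirbaev's criterion again in $\hat{F}$. The one step you handle differently is the manufacture of that right inverse. The paper decomposes the inverse $P$ additively as $P=Q+R$, where $Q$ collects the terms involving some $a_m$ with $m>l$ and $R$ the rest, and then argues by a support/cancellation analysis that $JQ=0$, hence $JR=I_k$, and truncates $R$. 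You instead push $JK=I_k$ through the retraction $\pi\colon\mathbb{Z}F_r\to\mathbb{Z}\hat{F}$ killing $a_{l+1},\ldots,a_r$; since $\pi$ is a ring homomorphism fixing $\mathbb{Z}\hat{F}$ pointwise, the identity $\pi(J)\pi(K)=I_k$ is automatic and $\pi(J)=[\,\hat{J}\mid 0\,]$, so the top $l$ rows of $\pi(K)$ do the job. Note that the two constructions produce genuinely different inverses (e.g.\ a term $a_{l+1}$ in $P$ contributes $0$ to $R$ but $1$ to $\pi(P)$), but both are valid. Your version is arguably cleaner, as it replaces the term-by-term support argument (which implicitly uses that a product of an element of $\hat F$ with an element outside $\hat F$ stays outside $\hat F$) with the functoriality of the group ring; the paper's version has the mild advantage of not needing to introduce the retraction and of transferring directly to settings, such as the abelian-by-abelian case in Section~\ref{sec:product_variety}, where the support bookkeeping is reused.
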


\begin{proof}

By Theorem \ref{thm:Umirbaev}, since the set $S$ is primitive in $F$, the associated Jacobian $J$ is such that there exists an $r\times k$ matrix $P=[p_{jl}]$ with $p_{jl}\in \mathbb ZF_r$
satisfying
    \begin{equation}
    \label{eqn_JP}
    JP=I_k,
    \end{equation}
where $I_k$ is the $k\times k$ identity matrix over $\mathbb ZF_r$.

For $m > l$, since elements of $S$ do not involve $a_m$, the $m^{th}$ column of the matrix $J$ consists of zeros and all other entries of $J$ do not involve $a_m$. Each entry of matrix $P$ can be uniquely written in the form $p_{ij}=q_{ij}+r_{ij}$, where $q_{ij}$ represents the sum of all terms in $p_{ij}$ involving some $a_m$ with $m>l$, and $r_{ij}$ is an element of $\mathbb ZF_{l}$ not involving any $a_m$ with $m>l$. Then for matrices $Q=[q_{ij}]$ and $R=[r_{ij}]$ we have $P=Q+R$, and by Equation~\eqref{eqn_JP}, $JQ+JR=I_k$. But each entry of matrix $JQ$ is either $0$ or involves $a_m$ for some $m>l$. Since neither $I_k$ nor $JR$ involves $a_m$ for $m>l$ we must have that $JQ=0$, yielding
    \begin{equation}
    \label{eqn_JR}
    JR=I_k
    \end{equation}
Now let $\hat J$ and $\hat R$ be the matrices obtained from $J$ and $R$, respectively, by deleting last $r-l$ columns and rows, respectively.  Then $\hat J$ is the Jacobian matrix of the set $S$ seen as a subset of $\hat F=F_{l}$ and $\hat R$ is a matrix over $\mathbb Z\hat F=\mathbb ZF_{l}$.  Also, equation~\eqref{eqn_JR} implies that
    \begin{equation*}
    \hat J\hat R=I_{k}.
    \end{equation*}
Thus by Umirbaev's criterion the set $S$ is a subset of the basis of $\hat{F}$.

\end{proof}

Via private communication, Ilya Kapovich has detailed an alternative proof of Theorem \ref{thm:free}, using Gersten's characterization of Whitehead's algorithm for subgroups \cite{Gersten}.  Olga Kharlampovich has also suggested a proof using Bass-Serre theory. The benefit of the above proof is that it generalizes to the case of free abelian-by-abelian groups of arbitrary exponents in Section~\ref{sec:product_variety} (and also to the case of free metabelian groups and partially to the case of free solvable groups, though shorter proofs in these cases are given here).

\section{The Abelian Case}\label{sec:abelian}

The answer to our Main Question in the free abelian case is also:  always.  Our proof works for free abelian groups of arbitrary exponent and uses linear algebra.

For any $n> 0$, Let $\Z_n := \Z/n\Z$ denote the free group of rank 1 in the variety $\mathbf A_n$ of all abelian groups of exponent dividing $n$, and let $F^n$ denote the subgroup of $F$ generated by all $n^{th}$ powers and similarly define $\hat{F}^n$.  Let $F'$ denote the commutator subgroup of $F$ and similarly define $\hat{F}'$.  Note $(\hat F)'=\widehat{(F')}$, so $\hat F'$ is well defined. Further, for each $x\in F$ we denote by $\bar x$ the image of $x$ in $F/F'$ under the canonical epimorphism.  For a subset $A$ of $F$ define $\bar A=\{\bar a\colon a\in A\}$.

We start with the proof for free abelian groups in the case of exponent $0$.

\begin{proposition}\label{prop:abelian}
Let $S=\{x_1,x_2,\ldots, x_k\}\subset \hat F$ be primitive in $F \mod F'$. Then $S$ is primitive in $\hat F\mod \hat F'$.
\end{proposition}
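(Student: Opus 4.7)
The plan is to pass to the abelianizations, translate both primitivity conditions into a standard linear-algebra criterion over $\Z$, and then observe that the criterion for $F/F'$ automatically descends to the corresponding criterion for $\hat F/\hat F'$.

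Concretely, I would identify $F/F'$ with $\Z^r$ via the basis $\bar a_1,\dots,\bar a_r$. Since $(\hat F)' = \hat F\cap F'$, the natural map $\hat F/\hat F'\to F/F'$ is injective and identifies $\hat F/\hat F'$ with the subgroup $\Z^l\subset\Z^r$ spanned by the first $l$ coordinate vectors. Because each $x_i\in\hat F$, its image $\bar x_i$ in $\Z^r$ has zero entries in positions $l+1,\dots,r$. Form the $k\times r$ integer matrix $M$ with rows $\bar x_1,\dots,\bar x_k$; its last $r-l$ columns vanish. Let $\hat M$ be the $k\times l$ matrix obtained by deleting these zero columns.

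The key ingredient is the standard consequence of Smith normal form: a $k$-tuple of vectors in $\Z^n$ extends to a basis of $\Z^n$ if and only if the $k\times k$ minors of the associated $k\times n$ integer matrix have greatest common divisor $1$. Applying this criterion to $M$, the hypothesis that $S$ is primitive in $F\mod F'$ gives that the $k\times k$ minors of $M$ generate $\Z$. But any $k\times k$ minor of $M$ that uses a column from positions $l+1,\dots,r$ is zero, so the nonzero $k\times k$ minors of $M$ are precisely the $k\times k$ minors of $\hat M$. In particular $k\le l$, since otherwise $\hat M$ would admit no $k\times k$ minors and the $\gcd$ for $M$ could not equal $1$. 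Hence the $k\times k$ minors of $\hat M$ also have $\gcd$ equal to $1$, and the criterion applied in the other direction to $\hat M$ yields that $\bar S$ extends to a basis of $\Z^l\cong \hat F/\hat F'$, which is the desired conclusion.

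There is no substantive obstacle here: the proposition reduces cleanly to linear algebra over $\Z$, and the only bookkeeping point is the observation that the vanishing of the last $r-l$ columns of $M$ makes those columns invisible to maximal-minor computations, so unimodularity of $M$ transfers to unimodularity of $\hat M$.
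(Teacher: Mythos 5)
Your proof is correct, but it runs on a different key lemma than the paper's. The paper extends $\bar S$ to a full basis of $\Z^r$, writes the resulting $r\times r$ change-of-basis matrix in block-triangular form $\left[\begin{smallmatrix}\hat M& P\\ 0& Q\end{smallmatrix}\right]$, and deduces $\det\hat M=\pm1$ from $\det\hat M\cdot\det Q=\pm1$; you instead work directly with the $k\times r$ matrix of $\bar S$ itself and invoke the Smith-normal-form criterion that a $k$-tuple extends to a basis of $\Z^n$ if and only if the $\gcd$ of its $k\times k$ minors is $1$, noting that zero columns are invisible to maximal minors. Your route is arguably cleaner: it requires no choice of extension (and so sidesteps the bookkeeping about which columns of the extended matrix vanish in the last $r-l$ rows), and it makes the inequality $k\le l$ fall out for free. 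What the paper's determinant argument buys is that it transfers verbatim to Theorem~\ref{thm:abelian_exponent}, where $\Z$ is replaced by $\Z_n$ and ``$\det M=\pm1$'' becomes ``$\det M$ is a unit'': since $\Z_n$ has zero divisors, the $\gcd$-of-minors criterion must there be restated as ``the minors generate the unit ideal,'' which is true over $\Z/n\Z$ but is a slightly less off-the-shelf fact (it is essentially the $m=n=0$-free version of condition $(i)$ in Theorem~\ref{thm:AmA}, and your cancellation-of-minors observation is exactly the one reused in Section~\ref{sec:product_variety}). So your argument is a valid and somewhat more economical proof of the proposition as stated; just be aware that extending it to the finite-exponent case takes one extra step of justification that the paper's version avoids.
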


Before we proceed to our proof, we note that this Proposition follows from a well-known fact (see, for example,~\cite{kargapolov_m:fundamentals_book_eng}) that the basis of any subgroup of a finitely generated free abelian group can be extended to a basis of the whole group. Indeed, since $\bar S$ can be extended to a basis in $F/F'$, we get that $\bar S$ is linearly independent. Hence, $\bar S$ is a basis of a subgroup $\langle \bar S\rangle<\hat F/\hat F'$. We give a proof of Proposition~\ref{prop:abelian} that extends to the case of free abelian groups of finite exponent in Theorem~\ref{thm:abelian_exponent}.

\begin{proof}
There is a canonical embedding $\Z^{l} \cong \hat F/\hat F'\hookrightarrow F/F' \cong \Z^r$ and with a slight abuse of notation we will sometimes identify $\hat F/\hat F'$ with its image under this embedding.

The standard basis of $\Z^r\cong F/F'$ is $\{\bar a_1, \dots, \bar a_r\}$, whereas $\{\bar a_1, \dots, \bar a_{l}\}$ is the standard basis of $\Z^{l} \cong \hat F/\hat F'$.  Extend $\bar S$ to a basis $B$ of $\Z^r$.  Express $B$ with respect to the standard basis as a matrix $M$, where the elements of $\bar S$ correspond to the first $k$ columns of $M$.  Let $\hat M$ denote the submatrix of $M$ obtained by deleting the last $r-l$ rows and columns.  Then $\hat M$ corresponds to a set $\hat B$ of $l$ elements of $\Z^{l}$. Since the elements of $\bar S$ do not involve generators $\bar a_{l+1},\ldots,\bar a_r$, the matrix $M$ has a block triangular shape:
\[M=\left[\begin{array}{cc}
\hat M&P\\
0&Q\\
\end{array}\right],\]
for some $l\times (r-l)$-matrix $P$ and $(r-l)\times(r-l)$-matrix $Q$. Then since $M$ is in $SL_r(\mathbb Z)$ we have
\[\det\hat M\cdot \det Q=\det M=\pm1.\]
This is possible only if $\det\hat M=\pm 1$, and $\hat M$ is invertible.  This shows $\hat B$ is a basis of $\Z^{l} \cong \hat{F}/\hat{F}'$.  But $\bar S \subset \hat B$, so $\bar S$ is primitive in $\hat{F}/\hat{F}'$ and thus $S$ is primitive in $\hat F\mod \hat F'$.
\end{proof}

Now we proceed to the case of free abelian groups with exponent $n > 0$. First we note that the free group of rank $r$ in this variety is isomorphic to $F/(F'F^n)$.

\begin{theorem}\label{thm:abelian_exponent}
Let $S=\{x_1,x_2,\ldots, x_k\}\subset \hat F$ be primitive in $F \mod F'F^n$. Then $S$ is primitive in $\hat F\mod \hat F'\hat F^n$.
\end{theorem}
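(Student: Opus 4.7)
The plan is to mimic the proof of Proposition~\ref{prop:abelian}, replacing the ring $\Z$ by $\Z_n$ throughout. I would first identify $F/F'F^n$ with $\Z_n^r$ and $\hat F/\hat F'\hat F^n$ with $\Z_n^l$, and verify that the natural map $\hat F/\hat F'\hat F^n \to F/F'F^n$ induced by inclusion is injective. The latter reduces to showing that any $w \in \hat F$ whose abelianization lies in $n\Z^r$ already lies in $n\Z^l$ (since its support is concentrated on the first $l$ coordinates), which forces $w \in \hat F'\hat F^n$. Under these identifications, the embedding becomes the standard inclusion $\Z_n^l \hookrightarrow \Z_n^r$ onto the first $l$ coordinates.

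Next I would use primitivity of $S$ mod $F'F^n$ to extend $\bar S$ to a basis $B$ of $\Z_n^r$ and form the $r\times r$ matrix $M$ over $\Z_n$ whose columns express $B$ in the standard basis, placing the elements of $\bar S$ in the first $k$ columns. Because $\bar S$ is supported on the first $l$ coordinates, $M$ takes the block triangular shape
\[M=\left[\begin{array}{cc}
\hat M & P\\
0 & Q
\end{array}\right],\]
where $\hat M$ is $l\times l$ and its first $k$ columns encode $\bar S$ with respect to $\bar a_1,\dots,\bar a_l$. The key step is then the determinantal argument: a basis of the free $\Z_n$-module $\Z_n^r$ corresponds to a matrix in $GL_r(\Z_n)$, so $\det M$ is a unit in $\Z_n$; the block triangular form gives $\det M = \det\hat M \cdot \det Q$ in $\Z_n$, forcing $\det\hat M$ to be a unit as well. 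Hence $\hat M \in GL_l(\Z_n)$, so its columns form a basis $\hat B$ of $\Z_n^l$ with $\bar S \subset \hat B$, and $S$ is primitive in $\hat F\mod\hat F'\hat F^n$.

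The main obstacle I would be most careful about is confirming that a basis of the rank-$r$ free group in the variety $\mathbf A_n$ is exactly a $\Z_n$-module basis of $\Z_n^r$, so that the change-of-basis matrix really lives in $GL_r(\Z_n)$ rather than merely being an invertible $\Z$-linear map on generators. This follows from the universal property of relatively free groups: bases are characterized by the extendability condition of the introduction, which forces any transition matrix between two bases to have a two-sided inverse over $\Z_n$. Once this identification is in place, everything else is linear algebra over $\Z_n$, no harder than the $n=0$ case.
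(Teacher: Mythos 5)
Your proposal is correct and follows essentially the same route as the paper: reduce to linear algebra over $\Z_n$, use the block triangular form of the change-of-basis matrix, and conclude from $\det M = \det\hat M\cdot\det Q$ being a unit that $\det\hat M$ is a unit, hence $\hat M$ is invertible. The extra checks you flag (injectivity of $\hat F/\hat F'\hat F^n \hookrightarrow F/F'F^n$ and the identification of bases with $GL_r(\Z_n)$) are sound and simply make explicit what the paper leaves implicit.
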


\begin{proof}
The proof is almost identical to that of Proposition~\ref{prop:abelian}.  We replace $F'$ with $F'F^n$, $\hat F'$ with $\hat F'\hat F^n$, and $\Z$ with $\Z_n$, but otherwise define all other objects in the same way.  The only other change to make is the following argument to show that $\hat M$ is invertible.  The matrix $M$ is invertible in $SL_r(\Z_n)$ if and only if $\det M$ is a unit in $\Z_n$. But a product of elements in $\Z_n$ is a unit if and only if each of the elements is also a unit.  Thus, $\det M = \det \hat M \cdot \det Q$ is a unit in $\Z_n$ if and only if $\det \hat M$ is a unit in $\Z_n$, if and only if $\hat M$ is invertible in $SL_r(\Z_n)$.
\end{proof}

\section{The Free Nilpotent Case}
\label{sec:nilpotent}

We now turn to the free nilpotent and free solvable cases.  A \emph{commutator} of \emph{weight} $c$ is an expression recursively defined by $[y_1, \dots, y_{c-1}, y_c] := [[y_1, \dots, y_{c-1}], y_c]$ and $[y_1, y_2] := y_1^{-1}y_2^{-1}y_1y_2$.  Fix a value for $c\geq2$, and let $\gamma_c(F)$ be the normal closure of subset of $F$ consisting of all commutators of weight $c$ (i.e. the $c^{th}$ term in the lower central series of $F$).  Then $F/\gamma_{c+1}(F)$ is the free nilpotent group of class $c$.  Also define $\gamma_1(F)=F$. Further, we write $F^{(t)}$ for the $t^{th}$ derived subgroup of $F$. Then $F/F^{(t)}$ is a free solvable group of derived length $t$.  Note that $F' = \gamma_2(F) = F^{(1)}$, so the free abelian group is the free nilpotent group of class $1$ and the free solvable group of derived length $1$.

\begin{theorem}\label{thm:nilpotent}~\\\vspace{-.4cm}
\begin{enumerate}
\item Let $S=\{x_1,x_2,\ldots, x_k\}\subset \hat F$ be primitive in $F \mod \gamma_{t+1}(F)$ for some $t\geq 1$. Then $S$ is primitive in $\hat F\mod \gamma_{t+1}(\hat F)$.
\end{enumerate}
\end{theorem}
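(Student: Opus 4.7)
The plan is to reduce to the abelian case established in Proposition~\ref{prop:abelian}, by invoking the classical fact that for a free nilpotent group $N$ of finite rank $r$, a subset of $N$ extends to a basis of $N$ if and only if its image in the abelianization $N/[N,N]\cong\Z^r$ extends to a basis there.

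Granting this fact, the argument is nearly immediate. Since $\gamma_{t+1}(F)\leq F'$ for every $t\geq 1$, any extension of $S$ to a basis of the free nilpotent group $F/\gamma_{t+1}(F)$ projects to $r$ elements of $F/F'\cong\Z^r$ that generate it, and hence form a basis of $F/F'$; so $S$ is primitive in $F\mod F'$. Proposition~\ref{prop:abelian} then yields $y_1,\ldots,y_{l-k}\in\hat F$ such that $\{\bar x_1,\ldots,\bar x_k,\bar y_1,\ldots,\bar y_{l-k}\}$ is a basis of $\hat F/\hat F'\cong\Z^l$. Applying the classical fact inside $\hat F/\gamma_{t+1}(\hat F)$ in the opposite direction, we conclude that $\{x_1,\ldots,x_k,y_1,\ldots,y_{l-k}\}$ is a basis of $\hat F/\gamma_{t+1}(\hat F)$, which is precisely the statement that $S$ is primitive in $\hat F\mod\gamma_{t+1}(\hat F)$.

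The main work is verifying the classical fact; the forward direction is obvious. For the converse, given $z_1,\ldots,z_l\in N=\hat F/\gamma_{t+1}(\hat F)$ whose images form a basis of $N/[N,N]$, the universal property of the free nilpotent group yields an endomorphism $\phi\colon N\to N$ sending the standard basis to $(z_i)$, and by hypothesis $\phi$ is surjective on the abelianization. The nontrivial point is upgrading this to surjectivity of $\phi$ itself, which I would prove by induction on the nilpotency class: at step $c$ one has $H\gamma_c(N)=N$ where $H=\phi(N)$, and since $\gamma_c(N)/\gamma_{c+1}(N)$ is central in $N/\gamma_{c+1}(N)$, any $c$-fold commutator $[v_1,\ldots,v_c]$ is unchanged modulo $\gamma_{c+1}(N)$ if each $v_i$ is replaced by an $H$-representative of its class mod $\gamma_c(N)$, forcing $\gamma_c(N)\subseteq H\gamma_{c+1}(N)$ and hence $H\gamma_{c+1}(N)=N$. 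Once $\phi$ is shown surjective, Hopficity of finitely generated nilpotent groups promotes it to an automorphism, establishing that $(z_i)$ is a basis. I would also look for a more direct route in the spirit of Kassabov's short proof credited in the acknowledgements, which may bypass the explicit induction by producing an inverse endomorphism directly from the abelianization data.
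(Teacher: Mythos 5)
Your proposal is correct and takes essentially the same route as the paper: the paper also reduces to Proposition~\ref{prop:abelian} via the standard fact (cited there as \cite[Lemma 5.9]{magnus_ks:combinatorial_group_theory}) that generation in a nilpotent group is detected in the abelianization, with Hopficity upgrading the resulting generating set of full size to a basis. Your write-up merely supplies the details (the induction on the lower central series and the Hopfian argument) that the paper leaves to the citation.
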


The following proof is based on suggestions and references from Martin Kassabov and the referee.  In the nilpotent case, our original proof used a primitivity lifting technique \cite{gupta_g:lifting_primitivity_nilpotent}.  In the solvable case, our original proof was based on a generalization of our approach in the free case, and used Fox derivatives and criteria for primitivity by Roman'kov and Timoshenko  in the free metabelian case \cite{romankov:primitivity,timoshenko:algorithmic}, and by Krasnikov in the free solvable case~\cite{krasnikov:generating_elements_eng}.  However, our original proofs were longer and less general.

\begin{proof}
It is a well-known fact (see, for example,~\cite[Lemma 5.9]{magnus_ks:combinatorial_group_theory}) that a set of elements generates a nilpotent group $N$ if and only if the projection of these elements to the abelianization $N/N'$ generates the abelianization. Assume a subset $S = \{x_1, \dots, x_k\}$ is primitive in $F$ mod $\gamma_{t+1}(F)$ and $S$ does not involve generators $a_{l+1},\ldots,a_r$. Then $S$ is also primitive over $F'$, which implies by Proposition~\ref{prop:abelian} that $S$ is primitive in $\hat F$ mod $\hat F'$. Now by the above fact $S$ must be primitive in $\hat F\mod \gamma_{t+1}(\hat F)$.
\end{proof}

\section{The Abelian-by-Abelian Case}
\label{sec:product_variety}

For integers $m, n \geq 0$, we consider the case of free groups in the variety $\mathbf A_m \mathbf A_n$ of all extensions of abelian groups of exponent dividing $m$ by abelian groups of exponent dividing $n$.  Let $V_{m,n}$ and $\hat{V}_{m,n}$ denote the corresponding fully invariant subgroups of $F$ and $\hat F$, respectively. The free Fox derivatives $\partial_i$ induce Fox derivatives $\partial^0_i\colon \Z F\to \Z_m(F/F'F^n)$  (see Gupta and Timoshenko~\cite{gupta_t:primitivity_in_AnAm96}) by
\[\Z F\stackrel{\partial_i}{\longrightarrow}\Z F\stackrel{\alpha^*}{\longrightarrow}\Z (F/F'F^n)\stackrel{\gamma^*}{\longrightarrow}\Z_m(F/F'F^n),\]
where $\alpha^*\colon\Z F\to\Z(F/F'F^n)$ and $\gamma^*\colon \Z(F/F'F^n)\to\Z_m(F/F'F^n)$ are linear extensions of the canonical epimorphisms $\alpha\colon F\to F/F'F^n$ and $\gamma\colon \Z\to\Z_m$ respectively.

In \cite{gupta_t:primitive_systems_in_AmAn99}, Gupta and Timoshenko prove the following theorem.

\begin{theorem}\cite{gupta_t:primitive_systems_in_AmAn99}\label{thm:AmA}
Let $S=\{x_1,x_2,\ldots, x_k\}\subset F$.  Let $J = (\partial^0_j x_i), i=1,\ldots,k, j=1,\ldots,r$ denote the $k \times r$ Jacobian matrix of $S$ over $\Z_m(F/F'F^n)$.  Assume that at least one of the conditions $m=0$, $n>0$, or $k=r-1$ fails. Then $S$ is primitive mod $V_{m,n}$ if and only if:
\begin{enumerate}
\item[$(i)$] the ideal in the ring $\Z_m(F/F'F^n)$ generated by $k^{th}$ order minors of $J$ is the whole ring, and
\item[$(ii)$] $S$ is primitive mod $F'F^n$.
\end{enumerate}
\end{theorem}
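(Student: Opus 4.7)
The plan is to imitate the Birman--Umirbaev criterion from the free case: primitivity of a $k$-tuple should be detected by right-invertibility of the Jacobian of Fox derivatives, only now over the ring $R := \Z_m(F/F'F^n)$ rather than $\Z F$. The two stated conditions split this into a ``linear algebra over $R$'' piece $(i)$ and a ``boundary data at the abelianization'' piece $(ii)$; the role of the latter is to pin down a lift of a $k\times r$ right inverse of $J$ to an actual automorphism of $F/V_{m,n}$.

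For the necessity direction, assume $S$ extends to a basis of $F/V_{m,n}$. Lift this basis to an endomorphism $\varphi$ of $F$ whose reduction mod $V_{m,n}$ is an automorphism, and let $\psi$ be a lift of the inverse automorphism. Applying the Fox chain rule to $\psi\varphi(a_i) \equiv a_i \pmod{V_{m,n}}$ and then pushing forward via $\gamma^*\alpha^*$ expresses the $r\times r$ identity as a product that has $J$ as a factor; extracting the relevant $k\times r$ block yields a right inverse of $J$ over $R$. By Cauchy--Binet the $k\times k$ minors of $J$ then generate the unit ideal in $R$, giving $(i)$. Condition $(ii)$ is immediate from composing further with the quotient $F/V_{m,n}\twoheadrightarrow F/F'F^n$, which sends primitive systems to primitive systems.

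For the sufficiency direction, use $(ii)$ to extend $S$ to a tuple $\{x_1, \ldots, x_k, y_{k+1}, \ldots, y_r\}$ that is a basis of $F$ modulo $F'F^n$. Over the commutative ring $R$, condition $(i)$ is equivalent to surjectivity of $J\colon R^r\to R^k$, and hence, since $R^k$ is projective, to the existence of a genuine right inverse of $J$ over $R$. The kernel $F'F^n/V_{m,n}$ is an abelian group of exponent dividing $m$ and carries a natural $R$-module structure via conjugation by $F/F'F^n$; the obstruction to promoting our tuple to a basis modulo $V_{m,n}$ takes values in this module. The plan is to use the right inverse of $J$ to construct corrections $y_j \mapsto y_j z_j$ with $z_j \in F'F^n/V_{m,n}$ that annihilate this obstruction, producing a genuine basis modulo $V_{m,n}$.

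The main obstacle, and the reason the hypothesis excludes the simultaneous case $m=0$, $n>0$, $k=r-1$, is this last step: when $m=0$ the kernel module $F'F^n/V_{m,n}$ is torsion-free, and when $k=r-1$ there is only one correcting direction left, so the natural map from $(F'F^n/V_{m,n})^{r-k}$ into the obstruction module can fail to be surjective. In all other cases the surjectivity holds essentially formally, and once it is in place both directions of the theorem fall out together.
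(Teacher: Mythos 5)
First, a point of order: the paper does not prove this statement. Theorem~\ref{thm:AmA} is imported verbatim from Gupta and Timoshenko \cite{gupta_t:primitive_systems_in_AmAn99} and used as a black box in Section~\ref{sec:product_variety}, so there is no in-paper proof to compare yours against. What follows is therefore an assessment of your sketch on its own terms.

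Your necessity direction is essentially right and standard: lifting the automorphism of $F/V_{m,n}$ and its inverse to endomorphisms of $F$, applying the Fox chain rule, and pushing forward along $\gamma^*\alpha^*$ does yield a right inverse of $J$ over the commutative ring $R=\Z_m(F/F'F^n)$, and Cauchy--Binet then gives $(i)$, with $(ii)$ immediate from the further quotient. (You do need to record the standard lemma that elements of $V_{m,n}=(F'F^n)'(F'F^n)^m$ have vanishing Fox derivatives after projection to $R$; without it the chain-rule identity mod $V_{m,n}$ does not descend.) The genuine gap is in the sufficiency direction. The passage from $(i)$ to a right inverse of $J$ over $R$ via the determinantal surjectivity criterion and projectivity of $R^k$ is fine, but the heart of the theorem --- converting that right inverse plus the mod-$F'F^n$ basis from $(ii)$ into an actual basis of $F/V_{m,n}$ --- is only announced as a ``plan.'' You would need to identify the obstruction module concretely (this is where the Magnus embedding of $F/V_{m,n}$ into an extension of $F/F'F^n$ by a free $R$-module of rank $r$ enters), show that replacing $y_j$ by $y_jz_j$ moves the obstruction by the image of $(z_j)$ under a specific $R$-linear map assembled from $J$ and the rows corresponding to the $y_j$, and prove that this map is onto precisely under the stated hypothesis. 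None of this is carried out, and your heuristic for the excluded case $(m=0,\ n>0,\ k=r-1)$ --- torsion-freeness of the kernel plus ``only one correcting direction'' --- is not an argument; in particular it does not distinguish between the exclusion being a limitation of the method and the equivalence genuinely failing there, which is the very point a proof must settle. As written, this is a plausible outline of the strategy behind the cited theorem, not a proof of it.
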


Using the above theorem we deduce:
\begin{theorem}
Let $S=\{x_1,x_2,\ldots, x_k\}\subset \hat F$ be primitive in $F \mod V_{m,n}$ and assume at least one of the following conditions fails:
$(a)$ $m=0$, $(b)$ $n>0$, or $(c)$ either $k = r-1$ or $k=l-1$. Then $S$ is primitive in $\hat F\mod \hat V_{m,n}$.
\end{theorem}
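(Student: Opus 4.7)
The plan is to verify both conditions of Theorem~\ref{thm:AmA} for $S$ regarded as a subset of $\hat F$ and then apply that theorem in $\hat F$. Since $\mathbf A_n\subseteq\mathbf A_m\mathbf A_n$ we have $V_{m,n}\subseteq F'F^n$, so primitivity of $S$ in $F$ mod $V_{m,n}$ implies primitivity of $S$ in $F$ mod $F'F^n$; Theorem~\ref{thm:abelian_exponent} then gives primitivity of $S$ in $\hat F$ mod $\hat F'\hat F^n$, which is condition (ii) for $\hat F$. The hypothesis that at least one of (a), (b), (c) fails likewise ensures that at least one of $m=0$, $n>0$, $k=r-1$ fails, so Theorem~\ref{thm:AmA} applies in $F$ and yields condition (i) there: the $k$-th order minors of the Jacobian $J$ of $S$ over $\Z_m(F/F'F^n)$ generate the whole ring.

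Next, I would pass from $J$ to $\hat J$. For each $x_i\in\hat F$ the recursive rules defining Fox derivatives give $\partial_j(x_i)=0$ for $j>l$ and $\partial_j(x_i)\in\Z\hat F$ for $j\leq l$. The embedding $\hat F/\hat F'\hat F^n\hookrightarrow F/F'F^n$ supplied by Theorem~\ref{thm:abelian_exponent} induces a ring embedding $\Z_m(\hat F/\hat F'\hat F^n)\hookrightarrow\Z_m(F/F'F^n)$ under which the first $l$ columns of $J$ are the image of $\hat J$ and the last $r-l$ columns of $J$ vanish. Consequently every nonzero $k$-th order minor of $J$ uses only the first $l$ columns and coincides (inside the big ring) with a $k$-th order minor of $\hat J$.

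The main technical step is to upgrade the relation $\sum_i c_i\Delta_i = 1$ in $\Z_m(F/F'F^n)$, where the $\Delta_i$ are the $k$-th order minors of $\hat J$, to an analogous relation with coefficients in the subring $\Z_m(\hat F/\hat F'\hat F^n)$. Here I would exploit the direct product decomposition $F/F'F^n\cong \hat F/\hat F'\hat F^n\times\langle\bar a_{l+1},\ldots,\bar a_r\rangle$, which makes $\Z_m(F/F'F^n)$ a free $\Z_m(\hat F/\hat F'\hat F^n)$-module with basis the monomials in $\bar a_{l+1},\ldots,\bar a_r$. Splitting each $c_i = r_i + q_i$, with $r_i\in\Z_m(\hat F/\hat F'\hat F^n)$ and $q_i$ in the complementary summand spanned by non-identity outside monomials, and using that each $\Delta_i$ already lies in $\Z_m(\hat F/\hat F'\hat F^n)$, the identity $1 = \sum r_i\Delta_i + \sum q_i\Delta_i$ separates across the two summands, forcing $\sum r_i\Delta_i = 1$ in $\Z_m(\hat F/\hat F'\hat F^n)$. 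This establishes condition (i) for $\hat J$, and the same case analysis as above shows that the hypothesis of Theorem~\ref{thm:AmA} for $\hat F$ (at least one of $m=0$, $n>0$, $k=l-1$ fails) is also met, so applying that theorem in $\hat F$ yields primitivity of $S$ in $\hat F$ mod $\hat V_{m,n}$. The main obstacle is this decomposition: it parallels the $P=Q+R$ argument in the proof of Theorem~\ref{thm:free}, and its correctness relies on the genuine splitting of the group ring, which in turn rests on the abelian case analysis (in particular the equality $\hat F\cap F'F^n=\hat F'\hat F^n$).
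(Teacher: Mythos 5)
Your proposal is correct and follows essentially the same route as the paper: establish condition (ii) via the abelian-exponent case, observe that the $k$-th order minors of $J$ and $\hat J$ coincide because the last $r-l$ columns of $J$ vanish, and then split the coefficients in $1=\sum c_i\Delta_i$ into the part lying in $\Z_m(\hat F/\hat F'\hat F^n)$ and the complementary part to conclude $\sum r_i\Delta_i=1$ in the subring. The paper phrases this last separation in terms of supports of the group-ring elements rather than the free-module decomposition of $\Z_m(F/F'F^n)$ over $\Z_m(\hat F/\hat F'\hat F^n)$, but the argument is the same.
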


\begin{proof}
Let $m_1, \dots, m_t$ denote the $k^{th}$ order minors of $J$ in $\Z_m(F/F'F^n)$, and let $J_m$ denote the ideal of $\Z_m(F/F'F^n)$ generated by $m_1, \dots, m_t$.  As $S \subset \hat F$, $J$ is a matrix over $\Z_m(\hat{F}/\hat{F}'\hat F^n)$ whose last $r-l$ columns are zero columns. Therefore the $l\times k$ Jacobian matrix $\hat J=(\partial_j x_i)$ formed by viewing $S$ as a subset of $\hat F$ is obtained from $J$ by simply removing last $r-l$ columns.  But removing columns of zeros does not change the $k^{th}$ order minors of a matrix, so the $k^{th}$ order minors of $\hat{J}$ are also $m_1, \dots, m_t$, considered now as elements in $\Z_m(\hat{F}/\hat{F}' \hat{F}^n)$ (via the canonical embedding of $\Z_m(\hat{F}/\hat{F}'\hat F^n)$ into $\Z_m(F/F'F^n)$).  Let $\hat{J}_m$ denote the ideal generated by $m_1, \dots, m_t$ in the ring $\Z_m(\hat{F}/\hat{F}'\hat{F}^n)$.

Since $S$ is primitive in $F \mod V_{m,n}$, by Theorem \ref{thm:AmA} we know that: ($i$) $J_m = \Z_m(F/F'F^n)$, and ($ii$) $S$ is primitive mod $F'F^n$.  It follows from ($i$) that $J_m$ contains the identity in $\Z_m(F/F'F^n)$.  Therefore for some $p_i\in\Z_m(F/F'F^n)$ we have
\begin{equation}
\label{eqn:ideal}
1=\sum_{i=1}^{t}m_ip_i.
\end{equation}
We can decompose each $p_i$ as
\[p_i=q_i+r_i,\]
where $q_i$ is the sum of terms in $p_i$ that do not involve $a_{l+1},\ldots,a_r$, and $r_i$ is the sum of terms in $p_i$ that do involve at least one of the $a_{l+1},\ldots,a_r$ in non-zero power. Then equation~\eqref{eqn:ideal} can be rewritten as
\begin{equation*}
\label{eqn:ideal2}
1-\sum_{i=1}^{t}m_iq_i=\sum_{i=1}^{t}m_ir_i
\end{equation*}
The left-hand side of the above equation does not have terms involving $a_{l+1},\ldots,a_r$.  However, since every term of each $m_i$ does not involve any of $a_{l+1}, \dots, a_r$ but every term of each $r_i$ does, every term of $m_ir_i$ involves at least one of the elements $a_{l+1}, \dots, a_r$.  Some of the terms of $m_ir_i$ might cancel, but it follows that every surviving term of $m_ir_i$ must involve at least one of the elements $a_{l+1}, \dots, a_r$.  Further, when we take the sum of $m_ir_i$ we cannot create terms that do not involve $a_{l+1},\ldots,a_r$ as the support of $\sum_{i=1}^{t}m_ir_i$ viewed as a function from $F/F'F^n$ to $\Z_m$ is included into the union of supports of $m_ir_i$, which, in turn, are included in the set of all elements of $F/F'F^n$ that involve $a_{l+1},\ldots,a_r$. Thus, for the two sides to be equal, the only possibility is if both sides are equal to zero.  But this yields that
\begin{equation*}
1=\sum_{i=1}^{t}m_iq_i
\end{equation*}
with $m_i, q_i\in \Z_m(\hat{F}/\hat{F}'\hat{F}^n)$. Thus the identity in $\Z_m(\hat{F}/\hat{F}'\hat{F}^n)$ belongs to the ideal $\hat{J}_m$, and so condition $(i)$ holds for $S$ with respect to the ring $\Z_m(\hat{F}/\hat{F}'\hat{F}^n)$.

It follows from ($ii$), the fact that $S \subset \hat F$, and Proposition~\ref{prop:abelian} and Theorem~\ref{thm:abelian_exponent} that $S$ is primitive mod $\hat{F}'\hat{F}^n$.  Thus, applying Theorem~\ref{thm:AmA} to $S$ thought of as a subset of $\hat F$, we see that $S$ is primitive in $\hat{F} \mod \hat{V}_{m,n}$. Note that Theorem~\ref{thm:AmA} applies in this situation if at least one of the conditions $m=0$, $n>0$, or $k=rank(\hat F)-1=l-1$ fails, which is true by assumption.
\end{proof}

Vitali\u{\i} Roman'kov suggested that one could potentially use the basis cofinality property of free groups of countable rank in subvarieties $\mathbf B$ of $\mathbf N_c \mathbf A$ (where $\mathbf N_c$ denotes the variety of nilpotent groups of class $c$) to generalize our results in this section. This property was proved in this case by Bryant and Roman'kov in~\cite{bryant_r:automorphism_group_of_rel_free_groups99} (see also~\cite{bryant_e:small_index_property97} for definitions). To do so, one must show that primitive systems in a finitely generated relatively free group in $\mathbf B$ that miss certain generators can be lifted to primitive systems in $F$ with the same property. For arbitrary varieties this statement is false, but it could be true for the varieties above.

\section{The Free Solvable Case}
\label{sec:solvable}

The final case we consider is the case of free solvable groups. First of all, the free metabelian case is covered by Section~\ref{sec:product_variety}. Alternatively, for the free metabelian case one can use the primitivity criteria developed by Timoshenko~\cite{timoshenko:on_the_inclusion_elements_into_a_basis_of_free_metabelian_group88,timoshenko:primitive_systems98}
and Roman'kov~\cite{romanov:primitive_elements91,romankov:primitivity}, which use Fox derivatives similar in spirit to Umirbaev's criterion~\cite{umirbaev:primitive} stated in Theorem~\ref{thm:Umirbaev}.
Unfortunately, there is no such criterion for free solvable groups of arbitrary derived length. However, Krasnikov~\cite{krasnikov:generating_elements} has obtained a result for groups of the form $F/[N,N]$ where $N$ is a normal subgroup of $F$, which works only in the case $k=r$.  The Fox derivatives used here are the free Fox derivatives of Section \ref{sec:free}.

\begin{theorem}\cite{krasnikov:generating_elements}\label{thm:krasnikov}
A set $\{x_1,\ldots,x_r\}$ in $F/[N,N]$ is a generating set of this group if and only if the $r\times r$ Jacobian matrix $J=\bigl[D_j(x_i)\bigr]_{1\leq i,j \leq r}$ of free Fox derivatives, projected to $Z(F/N)$, is left invertible in the integral ring $\mathbb Z(F/N)$.
\end{theorem}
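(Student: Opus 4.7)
The plan is to prove Krasnikov's criterion via the Magnus embedding and Fox calculus, paralleling the Birman--Umirbaev strategy behind Theorem~\ref{thm:Umirbaev}. Set $R:=\mathbb Z(F/N)$ and let $T$ be the group of $2\times 2$ upper-triangular matrices $\left(\begin{smallmatrix} g & v\\ 0 & 1\end{smallmatrix}\right)$ with $g\in F/N$ and $v\in R^r$, under the rule $(g,v)(h,w)=(gh,\,v+gw)$. The \emph{Magnus embedding} $\mu\colon F/[N,N]\to T$ is defined on generators by $a_i\mapsto\left(\begin{smallmatrix}\bar a_i & e_i\\ 0 & 1\end{smallmatrix}\right)$, and a routine induction using the Leibniz rule for $\partial_j$ shows that $\mu(x)=\left(\begin{smallmatrix}\bar x & (\overline{\partial_1 x},\ldots,\overline{\partial_r x})\\ 0 & 1\end{smallmatrix}\right)$ for every $x\in F$. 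I will take as a black box the classical theorem of Magnus, which says that $\mu$ is injective and identifies $\mu(N/[N,N])$ with the kernel of the Fox boundary $\partial\colon R^r\to I(F/N)$, $v\mapsto\sum_i v_i(\bar a_i-1)$.

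For the forward direction, suppose $\{x_1,\ldots,x_r\}$ generates $F/[N,N]$, so that each $a_j$ equals some word $w_j(x_1,\ldots,x_r)$ modulo $[N,N]$. Expanding $\mu(w_j)$ by the product rule in $T$, the top-right entry is a left $R$-linear combination $\sum_i p_{ji}\overline{Dx_i}$ of the rows $\overline{Dx_i}=(\overline{\partial_1 x_i},\ldots,\overline{\partial_r x_i})$ of $J$, where the coefficients $p_{ji}\in R$ are read off from the partial products of $w_j$. Comparing with the top-right entry $e_j$ of $\mu(a_j)$ gives $e_j=\sum_i p_{ji}\overline{Dx_i}$ for every $j$, i.e.\ $PJ=I_r$ for $P=(p_{ji})$.

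For the converse, assume $PJ=I_r$. I first show $H:=\langle\bar x_1,\ldots,\bar x_r\rangle$ equals $F/N$. The fundamental Fox identity $x-1=\sum_j\partial_j(x)(a_j-1)$ in $\mathbb Z F$, projected to $R$, reads $\bar x_i-1=\sum_j J_{ij}(\bar a_j-1)$, and left-multiplying by $P$ yields $\bar a_k-1=\sum_i P_{ki}(\bar x_i-1)$ for each $k$. Hence every $\bar a_k-1$ lies in the left $R$-ideal generated by $\{\bar x_i-1\}$, which is precisely the kernel of the canonical surjection $R\to\mathbb Z[H\backslash(F/N)]$. This kernel now contains the full augmentation ideal $I(F/N)$, so $|H\backslash(F/N)|=1$ and $H=F/N$.

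To promote this to generation in $F/[N,N]$, set $\mathcal H=\langle\mu(x_i)\rangle\le T$; by the previous step $\mathcal H$ surjects onto $F/N$, so by comparing the short exact sequences for $\mathcal H$ and $\mu(F/[N,N])$ it suffices to show $\mathcal H\cap\mu(N/[N,N])=\mu(N/[N,N])$. Both are left $R$-submodules of $\ker\partial$, the $R$-action being realized on the right by conjugation in $T$. For each relation $u(x_1,\ldots,x_r)$ of the generating set $\{\bar x_i\}$ of $F/N$, the element $\mu(u)$ lies in $\mathcal H\cap\mu(N/[N,N])$ with top-right $\overline{Du}$, which by the chain rule is a specified $R$-combination of the $\overline{Dx_i}$. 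Using $PJ=I_r$ to pass between the $\overline{Dx_i}$ and the standard basis of $R^r$, one checks that these top-right vectors exhaust $\ker\partial$ as an $R$-module, whence $\mathcal H\cap\mu(N/[N,N])=\mu(N/[N,N])$ and injectivity of $\mu$ yields $a_j\in\langle x_i\rangle$ in $F/[N,N]$. The main obstacle is precisely this last bookkeeping: converting the bare matrix identity $PJ=I_r$ over the noncommutative ring $R$ into the identification of a specific $R$-submodule of $\ker\partial$ with the full relation module arising from words in the $x_i$, and this is where a careful accounting of relations in $F/N$ versus in $F$ is required.
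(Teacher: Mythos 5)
The first thing to note is that the paper does not prove this statement at all: it is quoted from Krasnikov [Kra78] and used as a black box, so there is no internal proof to compare yours against. Your Magnus-embedding strategy is nonetheless the natural (and essentially the classical) route to such a criterion, and most of it is sound: the computation of $\mu(x)$ via the Leibniz rule, the forward direction via the chain rule applied to $a_j=w_j(x_1,\ldots,x_r)$, and the deduction that $H=\langle\bar x_1,\ldots,\bar x_r\rangle=F/N$ from $PJ=I_r$ are all correct. (One small bookkeeping slip: with the convention $\partial_j(uv)=\partial_j(u)+u\partial_j(v)$ the coefficients sit on the left, so the relevant ideal is the kernel of $R\to\mathbb Z[(F/N)/H]$ on \emph{left} cosets rather than $\mathbb Z[H\backslash(F/N)]$; the conclusion $[F/N:H]=1$ is unaffected.)

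The one place you stop short is exactly the step you flag as ``the main obstacle,'' and since that step carries the entire weight of the converse, it must be closed rather than asserted. It does close, and more easily than your phrasing suggests. Let $\partial\colon R^r\to I(F/N)$ send $e_j\mapsto\bar a_j-1$ and let $\partial'\colon R^r\to I(F/N)$ send the standard basis vector $f_i\mapsto\bar x_i-1$. Applying the relation-module exact sequence to the presentation $y_i\mapsto\bar x_i$ of $F/N$ (the same classical fact you already invoked for the presentation by the $a_j$; this is legitimate because step one established surjectivity), the set of top-right entries $\overline{Du(x)}$ arising from relations $u$ among the $\bar x_i$ is exactly $(\ker\partial')J$, where $cJ:=\sum_i c_i\overline{Dx_i}$. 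The fundamental Fox identity gives $\partial'(c)=\partial(cJ)$, and the identity $\bar a_k-1=\sum_iP_{ki}(\bar x_i-1)$ from your step one gives $\partial(v)=\partial'(vP)$. Hence for any $v\in\ker\partial$ one has $vP\in\ker\partial'$ and $(vP)J=v(PJ)=v$, so $(\ker\partial')J=\ker\partial$, whence $\mathcal H\cap\mu(N/[N,N])=\mu(N/[N,N])$ as required. With this short verification inserted, your proof is complete and would serve as a self-contained substitute for the citation.
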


With this criterion we obtain an analog of Theorem~\ref{thm:free} in the case of free solvable groups with a restriction on the size of $S$.

\begin{theorem}\label{thm:solvable}
Let $S=\{x_1,x_2,\ldots,x_{r-1}\} \subset \hat{F}$ be primitive in $F$ mod $F^{(t)}$, where $F^{(t)}$ is the $t$-th derived subgroup of $F$, $t > 0$.  Then $S$ is a basis for $\hat{F}/\hat{F}^{(t)}$.
\end{theorem}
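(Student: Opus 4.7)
The plan is to follow the template of Theorem~\ref{thm:free}, but with Krasnikov's criterion (Theorem~\ref{thm:krasnikov}) in place of Umirbaev's.  First, since $|S|=r-1$ and $S$ is primitive in $F\mod F^{(t)}$, extend $S$ to a basis $\{x_1,\ldots,x_{r-1},y\}$ of $F/F^{(t)}$.  Setting $N=F^{(t-1)}$, so that $[N,N]=F^{(t)}$, Theorem~\ref{thm:krasnikov} yields a matrix $P\in M_r(R)$, with $R:=\Z(F/F^{(t-1)})$, satisfying $PJ=I_r$, where $J=[\partial_j x_i]$ (writing $x_r:=y$) is the Fox Jacobian projected to $R$.

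Because $x_1,\ldots,x_{r-1}\in\hat F$, one has $\partial_r x_i=0$ for $i<r$, so the last column of $J$ has only a single nonzero entry, $\partial_r y$, at position $(r,r)$; moreover the top-left $(r-1)\times(r-1)$ block of $J$ equals the Fox Jacobian $\hat J$ of $S$ viewed inside $\hat F$, whose entries already lie in the subring $\hat R:=\Z(\hat F/\hat F^{(t-1)})\subseteq R$.  Comparing the last columns of $PJ=I_r$ yields $P_{r,r}\,\partial_r y=1$ and $P_{i,r}\,\partial_r y=0$ for $i<r$.  If $\partial_r y$ is a two-sided unit in $R$, then $P_{i,r}=0$ for $i<r$, the top-left block of $PJ=I_r$ collapses to $A\hat J=I_{r-1}$ (with $A$ the top-left block of $P$), and applying the ring retraction $\pi\colon R\to\hat R$ induced by the map $F\to\hat F$ sending $a_r\mapsto 1$ produces $\pi(A)\,\hat J=I_{r-1}$ over $\hat R$.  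Thus $\hat J$ is left invertible over $\hat R$; Theorem~\ref{thm:krasnikov} applied to $\hat F$ with $\hat N:=\hat F^{(t-1)}$ then shows that $S$ generates $\hat F/\hat F^{(t)}$, and since $|S|=r-1$ equals the rank of this relatively free (hence Hopfian) group, $S$ is a basis.

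The main obstacle is the step that upgrades the one-sided relation $P_{r,r}\,\partial_r y=1$ to two-sided invertibility of $\partial_r y$ in the noncommutative ring $R$.  I would dispatch this by appealing to the direct (Dedekind) finiteness of $\Z G$ for $G=F/F^{(t-1)}$: the group $G$ is a finitely generated free solvable group and hence residually finite, and an elementary finite-quotient argument then reduces Dedekind finiteness of $\Z G$ to the trivial case of integral group rings of finite groups.  Once $\partial_r y$ is invertible in $R$, cancelling it on the right of $P_{i,r}\,\partial_r y=0$ gives $P_{i,r}=0$ for $i<r$, completing the argument.
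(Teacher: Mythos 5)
Your proof is correct and follows the same skeleton as the paper's: extend $S$ to a basis of $F/F^{(t)}$, apply Krasnikov's criterion (Theorem~\ref{thm:krasnikov}) to get $PJ=I_r$ over $\Z(F/F^{(t-1)})$, exploit the block-triangular shape of $J$, kill the entries $p_{i,r}$ for $i<r$, extract $\hat P\hat J=I_{r-1}$, push the left inverse down into $\Z(\hat F/\hat F^{(t-1)})$, and apply Krasnikov again. The one genuine divergence is the justification that $p_{i,r}=0$. The paper deduces this from $p_{i,r}D_r(x_r)=0$ and $p_{r,r}D_r(x_r)=1$ by invoking Linnell's theorem that the Kaplansky zero-divisor conjecture holds for torsion-free elementary amenable groups, so that $\Z(F/F^{(t-1)})$ has no zero divisors and the nonzero element $D_r(x_r)$ can be cancelled. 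You instead upgrade $p_{r,r}\,\partial_r y=1$ to two-sided invertibility of $\partial_r y$ via direct (Dedekind) finiteness of $\Z(F/F^{(t-1)})$, obtained from residual finiteness of finitely generated free solvable groups together with stable finiteness of $\Z Q$ for $Q$ finite; this is a valid and arguably more elementary route, since direct finiteness is a far weaker property than the absence of zero divisors (indeed it holds for $\Z G$ for every group $G$ by Kaplansky's trace argument, so the residual-finiteness detour is not even strictly necessary). One detail worth spelling out in your finite-quotient reduction: the quotient must be chosen to separate the finitely many elements in the support of $yx-1$, so that vanishing of its image forces $yx=1$. Finally, your use of the retraction induced by $a_r\mapsto 1$ to land the left inverse in $\Z(\hat F/\hat F^{(t-1)})$ is a clean equivalent of the paper's support-decomposition argument, and your closing appeal to Hopficity to pass from ``generating set of size $r-1$'' to ``basis'' makes explicit a step the paper leaves implicit.
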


\begin{proof}
Since $S$ is primitive in $F\mod F^{(t)}$, we can extend $S$ to a set $\tilde S=\{x_1,x_2,\ldots,x_r\}\subset F$ such that the images of elements of $\tilde S$ in $F/F^{(t)}$ under the canonical epimorphism form a basis. By Theorem~\ref{thm:krasnikov} we have that the associated to $\tilde S$ Jacobian matrix $J=\bigl[D_j(x_i)\bigr]_{1\leq i, j \leq r}$ is left invertible over $\mathbb{Z}(F/F^{(t-1)})$, so there exists an $r\times r$ matrix $P=\bigl[p_{ij}]_{1 \leq i, j \leq r}$ with entries over $\mathbb Z(F/F^{(t-1)})$ such that
    \[PJ=I_r,\]
where $I_r$ is an $r\times r$ identity matrix over $\mathbb Z(F/F^{(t-1)})$.

Since elements of $S$ do not involve the generator $a_r$, we have
    \[D_r(x_j)=0, \quad j=1,\ldots,r-1,\]
and $D_i(x_j)$ does not involve $a_r$ for $1\leq j\leq r-1$. Therefore, matrix $J$ can be written in the form
    \[J=\left[
    \begin{array}{c|c}
    \hat J&\begin{array}{c}0\\ \vdots\\ 0
    \end{array}\\\hline
    D_1(x_r)\cdots D_{r-1}(x_r)&D_{r}(x_r)
    \end{array}\right],\]
where $\hat J=\bigl[D_i(x_j)\bigr]_{1\leq i, j\leq r-1}$ is the Jacobian martix of $S$ in $\hat F/\hat F^{(t-1)}$.

Then
    \[I_r=PJ=\left[
    \begin{array}{c|c}
    *&\begin{array}{c}p_{1r}D_r(x_r)\\ \vdots\\ p_{r-1,r}D_r(x_r)
    \end{array}\\\hline
    *\cdots *&p_{rr}D_{r}(x_r)
    \end{array}\right].\]
Hence, $p_{rr}D_r(x_r)=1$ in $\mathbb Z(F/F^{(t-1)})$ and, in particular, $D_r(x_r)\neq 0$. On the other hand,
    \[p_{in}D_r(x_r)=0, \quad i=1,\ldots,r-1,\]
so we must have $p_{in}=0$ in $\mathbb Z(F/F^{(t-1)})$. This follows from the fact that the free solvable group $F/F^{(t-1)}$ is a torsion-free elementary amenable group, and for such groups the Kaplansky conjecture on zero divisors holds true~\cite{linnell:zero_divisors}. In particular, this implies that the ring $\mathbb Z(F/F^{(t-1)})$ does not have zero divisors.

Now we have

    \begin{eqnarray*}
    I_r=PJ
    &=&\left[\begin{array}{c|c}
        \hat P&\begin{array}{c}0\\ \vdots\\0
        \end{array}\\\hline
        p_{r1}\cdots p_{r,r-1}&p_{rr}
    \end{array}\right] \left[\begin{array}{c|c}
        \hat J&\begin{array}{c}0\\ \vdots\\ 0
        \end{array}\\\hline
        D_1(x_r)\cdots D_{r-1}(x_r)&D_{r}(x_r)
    \end{array}\right]\\
    &=&\left[\begin{array}{c|c}
        \hat P\hat J&\begin{array}{c}0\\ \vdots\\ 0
        \end{array}\\\hline
        *\cdots *&p_{rr}D_{r}(x_r)
    \end{array}\right].
    \end{eqnarray*}

Therefore we must have $\hat P\hat J=I_{r-1}$, where $I_{r-1}$ is the $(r-1)\times(r-1)$ identity matrix over $\mathbb Z(F/F^{(t-1)})$. Similarly to the proof in the free case, it follows that $\hat P$ can be chosen to be in $\mathbb Z(\hat F/\hat F^{(t-1)})$. Finally, applying Theorem~\ref{thm:krasnikov} again, we obtain that $S$ is a basis for $\hat F/\hat F^{(t)}$.
\end{proof}


\def\cprime{$'$} \def\cprime{$'$} \def\cprime{$'$} \def\cprime{$'$}
  \def\cprime{$'$}

\end{document}